\newtheorem{thm}{Theorem}[section]
\newtheorem{lem}[thm]{Lemma}
\newtheorem{cor}[thm]{Corollary}
\theoremstyle{definition}										
\newtheorem{rem}[thm]{Remark}
\newtheorem{ex}[thm]{Example}
\numberwithin{equation}{section}								
\newcommand{\bb}[1]{\mathbb{#1}}								
\newcommand{\ii}{\textup{i}}									
\newcommand{\set}[4]{\left\{ #1 \right\}_{#2 = #3}^{#4}}				
\newcommand{\diag}[1]{\operatorname{\rm diag}\left( #1 \right)}			
\newcommand{\circulant}[1]{\operatorname{\rm circ} #1}					
\newcommand{\sr}[1]{\rho\left(#1\right)}							
\newcommand{\sig}[1]{\sigma \left( #1 \right)}						
\newcommand{\peri}[1]{\pi\left(#1\right)}							
\newcommand{\dg}[1]{\Gamma \left( #1 \right)}						
\newcommand{\mat}[2]{M_{#1}(#2)}								
\newcommand{\jordan}[2]{J_{#1}{\left( #2 \right)}}					
\newcommand{\inv}[1]{#1^{-1}}								
\newcommand{\hyp}[2]{\hyperref[#1]{{\rm #2 \ref*{#1}}}}				
\journal{Linear Algebra and its Applications}
\begin{document}
\begin{frontmatter}
\title{Jordan Chains of \emph{h}-cyclic Matrices} 

\author[addy1]{Judith J. McDonald}										
\ead{jmcdonald@math.wsu.edu}
\ead[url]{http://www.math.wsu.edu/math/faculty/jmcdonald/}

\author[addy2]{Pietro Paparella\corref{corpp}}
\ead{ppaparella@wm.edu}
\ead[url]{http://ppaparella.people.wm.edu/}

\cortext[corpp]{Corresponding author.}

\address[addy1]{Department of Mathematics, Washington State University, Pullman, WA 99164-1113, U.S.A.}
\address[addy2]{Department of Mathematics, College of William \& Mary, Williamsburg, VA 23187-8795, U.S.A.}

\begin{abstract}
Arising from the classification of the matrix-roots of a nonnegative imprimitive irreducible matrix, we present results concerning the Jordan chains of an \textit{h}-cyclic matrix. We also present ancillary results applicable to nonnegative imprimitive irreducible matrices and demonstrate these results via examples.
\end{abstract}

\begin{keyword}
nonnegative matrix \sep Jordan chain \sep irreducible matrix \sep cyclic matrix \sep Perron-Frobenius theorem

\MSC[2010] 15A18 \sep 15B99 \sep 15B48
\end{keyword}
\end{frontmatter}

\section{Introduction}

The study of nonnegative matrices has its roots in the Perron-Frobenius Theorem, which asserts that a nonnegative irreducible matrix has a positive eigenvector associated with its spectral radius.  This study has been extended to include results on reducible nonnegative matrices. For surveys of some of these results see \cite{bp1994}, \cite{h1999}, and \cite{s1986}. Spectral properties of nonnegative matrices have proven very useful in the study of other related classes of matrices, such as M-matrices (see \cite{bp1994}) and eventually nonnegative matrices (see, e.g., \cite{cnm2002, cnm2004, {f1978}, {h2009}, {jt2004}, {m2003}, {n2006}, {nt2009}, {mpt2014}, {zt1999},{zm2003}}).

The focus of this article is to exploit the cyclicity of the spectrum of $h$-cyclic matrices to glean more information about the generalized eigenvectors of nonnegative and eventually nonnegative matrices. In \cite{t1999}, Tam observed that, for nonnegative irreducible imprimitive matrices, it is possible to predict the structure of the peripheral eigenvectors from the Perron vector.  This idea is now extended to the generalized eigenvectors and Jordan chains of all the eigenvalues associated with any $h$-cyclic matrix, and irreducible nonnegative matrices in particular. We use these results in our paper on matrix-roots of imprimitive irreducible nonnegative matrices \cite{mp2014b}.

\section{Notation and Definitions}

Denote by $\ii$ the imaginary unit, i.e., $\ii := \sqrt{-1}$. When convenient, an indexed set of the form $\{ x_i, x_{i+1}, \dots, x_{i+j} \}$ is abbreviated to $\left\{ x_k \right\}_{k=i}^{i+j}$. 

For $h \in \bb{N}$, $h>1$,  
\begin{align}
R(h) &:= \{ 0, 1, \dots, h - 1 \}													\nonumber 			\\
\omega &:= \exp{\left( 2 \pi \ii/h \right)} \in \bb{C}, 										\nonumber 			\\
\Omega_h &:= \set{\omega^k}{k}{0}{h-1} \subseteq \bb{C}, 								\label{bigomegah}  
\end{align}
and
\begin{align}														
\nu_h := 
\left( 1, \omega, \dots, \omega^{h-1} \right) \in \bb{C}^n.									\label{omegastar} 
\end{align}

Denote by $\mat{n}{\bb{C}}$ ($\mat{n}{\bb{R}}$) the algebra of complex (respectively, real) $n \times n$ matrices. Given $A \in \mat{n}{\bb{C}}$, the \textit{spectrum} of $A$ is denoted by $\sig{A}$; the \emph{spectral radius} of $A$ is denoted by $\rho = \sr{A}$; and the \emph{peripheral spectrum}, denoted by $\peri{A}$, is the multi-set given by 
\begin{align*}
\peri{A} = \{ \lambda \in \sig{A} : |\lambda| = \rho \}.
\end{align*} 
The (block) $(i,j)$-entry of $A$ is denoted by $a_{ij}$ or $[A]_{ij}$ and the (block) entries of $A$ are denoted by $[ a_{ij}]$ or $[a_{ij}]_n^{i,j=1}$.

 The \emph{direct sum} of the matrices $A_1, \dots, A_k$, where $A_i \in \mat{n_i}{\bb{C}}$, denoted by $A_1 \oplus \dots \oplus A_k$, $\bigoplus_{i=1}^k A_i$, or $\diag{A_1,\dots,A_k}$, is the $n \times n$ matrix 
\[ 
 \left[
 \begin{array}{ccc}
 A_1 &  & \multirow{2}{*}{\Large 0} \\
 \multirow{2}{*}{\Large 0} & \ddots &  \\
  &  & A_k
 \end{array}
 \right], 
\]
where $n = \sum_{i=1}^k n_i$.

For $\lambda \in \bb{C}$, $\jordan{n}{\lambda}$ denotes the $n \times n$ \emph{Jordan block} with eigenvalue $\lambda$. For $A \in \mat{n}{\bb{C}}$, denote by $J = \inv{Z} A Z = \bigoplus_{i=1}^t \jordan{n_i}{\lambda_i} = \bigoplus_{i=1}^t J_{n_i}$, where $\sum n_i = n$, a Jordan canonical form of $A$. 

For $c = (c_1, \dots, c_n ) \in \bb{C}^n$, the \emph{circulant matrix} or \emph{circulant} of $c$, denoted by $\circulant{(c)}$, is the $n \times n$ matrix 
\begin{align*}
\begin{bmatrix}
c_1 & c_2 & \cdots & c_n 			\\
c_n & c_1 & \cdots & c_{n-1}		\\
\vdots & \vdots & \ddots & \vdots 	\\
c_2 & c_3 & \cdots & c_1 	
\end{bmatrix}.
\end{align*}
For $n \in \bb{N}$, let 
\[ K_n := \circulant{(\overbrace{0,1,0,\dots,0}^n)}
=\begin{bmatrix}
 & 1 & & \\
 & & \ddots \\
 & & & 1 \\
1 
\end{bmatrix} \in \mat{n}{\bb{R}}. \]

For $A$, $B \in \mat{n}{\bb{C}}$, the \emph{hadamard product} of $A$ and $B$, denoted by $A \circ B$, is the $n \times n$ matrix whose $(i,j)$-entry is $a_{ij} b_{ij}$. 

\section{Preliminaries}

For notation and definitions concerning the \emph{combinatorial stucture of a matrix}, i.e., the location of the zero-nonzero entries of a matrix, we follow \cite{br1991} and \cite{h2009}; for further results concerning \emph{combinatorial matrix theory}, see \cite{br1991} and references therein.

A \emph{directed graph} (or simply \emph{digraph}) $\Gamma = (V,E)$ consists of a finite, nonempty set $V$ of \emph{vertices}, together with a set $E \subseteq V \times V$ of \emph{arcs}. For $A \in \mat{n}{\bb{C}}$, the \emph{directed graph} (or simply \emph{digraph}) of $A$, denoted by $\Gamma = \dg{A}$, has vertex set $V = \{ 1, \dots, n \}$ and arc set $E = \{ (i, j) \in V \times V : a_{ij} \neq 0\}$. If $R$, $C \subseteq \{1,\dots, n\}$, then $A[R|C]$ denotes the submatrix of $A$ whose rows and columns are indexed by $R$ and $C$, respectively. 

A digraph $\Gamma$ is \emph{strongly connected} if for any two distinct vertices $u$ and $v$ of $\Gamma$, there is a walk in $\Gamma$ from $u$ to $v$ (following \cite{br1991}, we consider every vertex of $V$ as strongly connected to itself). For a strongly connected digraph $\Gamma$, the \emph{index of imprimitivity} is the greatest common divisor of the lengths of the closed walks in $\Gamma$. A strong digraph is \emph{primitive} if its index of imprimitivity is one, otherwise it is \emph{imprimitive}. 

For $n \geq 2$, a matrix $A \in \mat{n}{\bb{C}}$,  is \emph{reducible} if there exists a permutation matrix $P$ such that
\begin{align*}
P^\top A P =
\begin{bmatrix}
A_{11} & A_{12} \\
0 & A_{22}
\end{bmatrix},
\end{align*}
where $A_{11}$ and $A_{22}$ are nonempty square matrices and $0$ is a zero block. If $A$ is not reducible, then A is called \emph{irreducible}. The connection between irreducibility and the digraph of $A$ is as follows: $A$ is irreducible if and only if $\dg{A}$ is strongly connected\footnote{Following \cite{br1991}, vertices are strongly connected to themselves so we take this result to hold for all $n \in \bb{N}$ and not just $n\in \bb{N}$, $n \geq 2$. In particular, a $1\times1$ block with entry 0 is considered irreducible in this article.} (see, e.g., \cite[Theorem 3.2.1]{br1991} or \cite[Theorem 6.2.24]{hj1990}). 

For $h \geq 2$, a digraph $\Gamma = ( V, E )$ is \emph{cyclically $h$-partite} if there exists an ordered partition $\Pi = (\pi_1,\dots, \pi_h)$ of $V$ into $h$ nonempty subsets such that for each arc $(i, j) \in E$, there exists $\ell \in \{ 1, \dots, h \}$ such that $i \in \pi_\ell$ and $j \in \pi_{\ell+1}$ (where $V_{h + 1} := V_1$). For $h \geq 2$, a strong digraph $\Gamma$ is cyclically $h$-partite if and only if $h$ divides the index of imprimitivity (see, e.g., \cite[p. 70]{br1991}). 

A matrix $A \in \mat{n}{\bb{C}}$ is called \emph{h-cyclic} (terminology was introduced in \cite{r1936}) if $\dg{A}$ is cyclically $h$-partite and if $\dg{A}$ is cyclically $h$-partite with ordered partition $\Pi$, then $A$ is said to be \emph{h-cyclic with partition} $\Pi$ or that $\Pi$ \emph{describes the $h$-cyclic structure of A}. The ordered partition $\Pi = (\pi_1,\dots, \pi_h)$ is \emph{consecutive} if $\pi_1 = \{1,\dots, i_1\}$, $\pi_2 = \{i_1 + 1,\dots, i_2\},\dots, \pi_h = \{ i_{h - 1} + 1, \dots, n \}$. If $A$ is $h$-cyclic with consecutive ordered partition $\Pi$, then $A$ has the block form
\begin{align}
\begin{bmatrix} 
0 	& A_{12} 	& 0 		& \cdots 	& 0 		\\
0 	& 0 		& A_{23} 	& \ddots 	& \vdots	\\
\vdots & \vdots 	& \ddots 	& \ddots 	& 0		\\
0 	& 0		& \cdots	& 0 		& A_{(h-1)h}	\\
A_{h1} & 0 & 0 &\cdots & 0
\end{bmatrix}															\label{cyclic_form} 
\end{align}
where $A_{i,i+1} = A[\pi_i|\pi_{i+1}]$ (\cite[p. 71]{br1991}). For any $h$-cyclic matrix $A$, there exists a permutation matrix $P$ such that $P^\top AP$ is $h$-cyclic with consecutive ordered partition. The \emph{cyclic index} or \emph{index of cyclicity} of $A$ is the largest $h$ for which $A$ is $h$-cyclic.

An irreducible nonnegative matrix $A$ is \emph{primitive} if $\dg{A}$ is primitive, and the \emph{index of imprimitivity} of $A$ is the index of imprimitivity of $\dg{A}$.  If $A$ is irreducible and imprimitive with index of imprimitivity $h \geq 2$, then $h$ is the cyclic index of $A$, $\dg{A}$ is cyclically $h$-partite with ordered partition $\Pi = (\pi_1,\dots, \pi_h)$, and the sets $\pi_i$ are uniquely determined (up to cyclic permutation of the $\pi_i$) (see, for example, \cite[p. 70]{br1991}). Furthermore, $\dg{A^h}$ is the disjoint union of $h$ primitive digraphs on the sets of vertices $\pi_i$, $i = 1,\dots, h$ (see, e.g., \cite[\S 3.4]{br1991}).

Following \cite{h2009}, given an ordered partition $\Pi = \left( \pi_1,\dots,\pi_h \right)$ of $\{1, \dots, n\}$ into $h$ nonnempty subsets, the \emph{cyclic characteristic matrix}, denoted by  $\chi_\Pi$, is the $n \times n$ matrix whose $(i,j)$-entry is 1 if there exists $\ell \in \{1,\dots, h\}$ such that $i \in \pi_\ell$ and $j \in \pi_{\ell + 1}$, and 0 otherwise. For an ordered partition $\Pi = \left( \pi_1,\dots,\pi_h \right)$ of $\{1, \dots, n\}$ into $h$ nonnempty subsets, note that 
\begin{enumerate}[label=(\arabic*)]
\item $\chi_\Pi$ is $h$-cyclic and $\dg{\chi_\Pi}$ contains every arc $(i,j)$ for $i \in \pi_\ell$ and $j \in \pi_{\ell+1}$; and
\item $A \in \mat{n}{\bb{C}}$ is $h$-cyclic with ordered partition $\Pi$ if and only if $\dg{A} \subseteq \dg{\chi_\Pi}$.
\end{enumerate}

Finally, we recall the Perron-Frobenius Theorem for irreducible imprimitive matrices.

\begin{thm}[see, e.g., \cite{bp1994, hj1990}] \label{pftirr}
Let $A \in \mat{n}{\bb{R}}$, $n \geq 2$, and suppose that $A$ is irreducible, nonnegative, and $h$ is the cyclic index of $A$. Then
\begin{enumerate}[label=(\alph*)]
\item $\rho >0$;
\item $\rho \in \sig{A}$;
\item there exists a positive vector $x$ such that $Ax = \rho x$; 
\item $\rho$ is an algebraically (and hence geometrically) simple eigenvalue of $A$; and
\item $\peri{A} = \left\{ \rho \omega^k : k \in R(h) \right\}$.
\item $\omega^k \sig{A}=\sig{A}$ for $k \in R(h)$.
\end{enumerate}
\end{thm}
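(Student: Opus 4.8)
The plan is to treat the core Perron--Frobenius conclusions (a)--(d) separately from the cyclic conclusions (e)--(f), since the former are established by the classical analytic argument for irreducible nonnegative matrices, whereas the latter follow from the block structure \eqref{cyclic_form} by an explicit diagonal similarity. For (a)--(d), I would first record the standard consequence of irreducibility that $(I + A)^{n-1}$ is entrywise positive: because $\dg{A}$ is strongly connected, every ordered pair of vertices is joined by a walk of length at most $n-1$, so no entry of $(I+A)^{n-1}$ can vanish.

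With this positivity in hand, I would work with the Collatz--Wielandt function $r(x) := \min_{i \,:\, x_i \neq 0}(Ax)_i/x_i$ over nonnegative nonzero $x$ and set $\rho := \sup_x r(x)$ on the (compact) standard simplex. A continuity/compactness argument produces a maximizing vector $x$, and applying $(I+A)^{n-1}$ to $x$ forces $x > 0$ together with $Ax = \rho x$; this yields (c), and since $x>0$ while $A \geq 0$ is irreducible with $n \geq 2$ (so $Ax \neq 0$) it also gives (a). That this $\rho$ coincides with $\sr{A}$, and hence lies in $\sig{A}$, gives (b). For the algebraic simplicity in (d), I would first check that the geometric multiplicity is one, using that any eigenvector for $\rho$ must be a scalar multiple of the positive vector $x$ (a nonzero eigenvector for $\rho$ cannot change sign), and then rule out a nontrivial Jordan block by pairing a putative generalized eigenvector against a positive left Perron vector of $A^{\top}$ to reach a contradiction; geometric simplicity then follows.

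For (e)--(f), after conjugating by a permutation I may assume $A$ has the consecutive block form \eqref{cyclic_form} with diagonal blocks indexed by $\pi_1,\dots,\pi_h$. I would introduce the block-diagonal matrix $D := \diag{\omega^0 I_{|\pi_1|},\, \omega^1 I_{|\pi_2|},\, \dots,\, \omega^{h-1} I_{|\pi_h|}}$ and compute $\inv{D} A D$ block by block: the $(i,i+1)$ block is scaled by $\omega^{-(i-1)}\omega^{i} = \omega$, and the wrap-around $(h,1)$ block by $\omega^{-(h-1)} = \omega^{1-h} = \omega$ since $\omega^h = 1$, so that $\inv{D} A D = \omega A$. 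Thus $A$ is similar to $\omega A$, whence $\sig{A} = \omega\,\sig{A}$, and iterating yields $\omega^k\,\sig{A} = \sig{A}$ for all $k \in R(h)$, proving (f). For (e), combining (b), (f), and the simplicity from (d) shows that each $\rho\omega^k$ is a (simple) peripheral eigenvalue; to confirm these are the only ones, I would pass to $A^h$, which by the structural facts recalled earlier is a direct sum of $h$ primitive matrices each of spectral radius $\rho^h$, apply Perron--Frobenius for primitive matrices to each summand, and pull the result back to conclude $\peri{A} = \{\rho\omega^k : k \in R(h)\}$.

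The main obstacle is the core existence-and-simplicity package (a)--(d): producing the positive eigenvector rigorously requires the compactness argument together with the positivity of $(I+A)^{n-1}$, and excluding a nontrivial Jordan block at $\rho$ in (d) is the most delicate point. By contrast, the cyclic conclusions (e)--(f) are comparatively mechanical once the similarity $\inv{D} A D = \omega A$ is established, the only care needed being the multiplicity bookkeeping in passing between $A$ and $A^h$ to verify that no peripheral eigenvalues beyond the $\rho\omega^k$ arise.
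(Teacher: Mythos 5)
The paper does not prove this theorem: it is recalled as a classical result and attributed to the standard references \cite{bp1994, hj1990}, so there is no internal proof to compare against. Your sketch is the standard textbook argument found in those references --- the Collatz--Wielandt/Wielandt development via positivity of $(I+A)^{n-1}$ for (a)--(d), the diagonal similarity $\inv{D}AD=\omega A$ with $D=\diag{I,\omega I,\dots,\omega^{h-1}I}$ for (f), and the passage to the primitive diagonal blocks of $A^h$ for (e) --- and it is essentially correct, including the two delicate points you flag (restricting the maximization of $r(x)$ to the positive compact image of the simplex under $(I+A)^{n-1}$ to get continuity, and pairing a putative generalized eigenvector against the left Perron vector to exclude a nontrivial Jordan block at $\rho$). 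The only step worth spelling out further is that each diagonal block of $A^h$ has spectral radius exactly $\rho^h$; this follows because the restriction of the positive Perron vector $x$ to $\pi_i$ is a positive eigenvector of that block with eigenvalue $\rho^h$, and a positive eigenvector of an irreducible nonnegative matrix must belong to its spectral radius.
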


\section{Jordan Chains of \emph{h}-cyclic matrices}

Unless otherwise noted, in this section it is assumed that $A \in \mat{n}{\bb{C}}$ is nonsingular, $h$-cyclic with ordered-partition $\Pi$, and has the form \eqref{cyclic_form}.

The following lemma describes the Jordan structure of $A$.

\begin{lem} \label{jchainlemma}
For $i$, $j \in \bb{Z}$, let $\alpha_{ij} := (i-j)\bmod{h}$. 
\begin{enumerate}
\item If $\left\{ x_{\langle 0, j \rangle} \right\}_{j=1}^r$ is a right Jordan chain corresponding to $\lambda \in \sig{A}$, where $x_{\left< 0, j \right>}$ is partitioned conformably with $A$ as 
\begin{align*}
x_{\left< 0, j \right>} = 
\begin{bmatrix} 
x_{1j} \\ 
x_{2j} \\ 
\vdots \\ 
x_{hj} \end{bmatrix},~j=1,\dots,r,
\end{align*}
then, for $k \in R(h)$, the set 
\begin{align*}
\left\{
x_{\left< k, j \right>} :=
\begin{bmatrix} 
(\omega^k)^{\alpha_{1j}} x_{1j} \\ 
(\omega^k)^{\alpha_{2j}} x_{2j} \\ 
\vdots \\ 
(\omega^k)^{\alpha_{hj}} x_{hj} 
\end{bmatrix} 
\right\}_{j=1}^r
\end{align*}
is a right Jordan chain corresponding to $\lambda \omega^k$.
\item If $\left\{ y_{\left< j, 0 \right>} \right\}_{j = 1}^r$ is a left Jordan chain corresponding to $\lambda \in \sig{A}$, where $y_{\left< j , 0 \right>}$ is partitioned conformably with $A$ as 
\begin{align*}
y_{\left< j , 0 \right>}^\top = \begin{bmatrix} y_{j1}^\top & y_{j2}^\top & \cdots & y_{jh}^\top \end{bmatrix},~j = 1,\dots,r,
\end{align*}
then, for $k \in R(h)$, the set
\begin{align*}
\left\{  
y_{\left< j , k \right>}^\top :=
\begin{bmatrix} 
(\omega^k)^{\alpha_{j1}} y_{j1}^\top & 
(\omega^k)^{\alpha_{j2}} y_{j2}^\top 
& \cdots & (\omega^k)^{\alpha_{jh}} y_{jh}^\top 
\end{bmatrix} \right\}_{j = 1}^r 										
\end{align*}
is a left Jordan chain corresponding to $\lambda \omega^k$.
\end{enumerate}
\end{lem}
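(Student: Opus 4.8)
My plan is to reduce the entire lemma to a single conjugation identity. Fix $k \in R(h)$, write $n_i := |\pi_i|$, and set
\[ D := \bigoplus_{i=1}^{h} \omega^{ik} I_{n_i} \in \mat{n}{\bb{C}}. \]
The first step is to establish
\begin{equation}
\inv{D} A D = \omega^k A. \tag{$\ast$}
\end{equation}
Because $A$ has the block form \eqref{cyclic_form}, conjugation by the block-scalar matrix $D$ multiplies the $(i,i+1)$-block by $\omega^{-ik}\cdot\omega^{(i+1)k} = \omega^k$ for $i = 1,\dots,h-1$, and multiplies the wrap-around $(h,1)$-block by $\omega^{-hk}\cdot\omega^{k} = \omega^{(1-h)k} = \omega^k$, the last equality using $\omega^{hk} = (\omega^h)^k = 1$; all other blocks vanish. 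Hence $(\ast)$ holds. I view $(\ast)$ as the matrix-level refinement of the spectral symmetry $\omega^k\sig{A} = \sig{A}$ of \hyp{pftirr}{Theorem}(f), and it is the only place where the cyclic structure and the relation $\omega^h = 1$ enter.

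For part (1), the point is that the claimed vectors are $D$-images of the original chain up to a global scalar: since $\alpha_{ij} = (i-j)\bmod h$ and $\omega^{hk} = 1$, the $i$-th block of $x_{\left<k,j\right>}$ is $\omega^{(i-j)k}x_{ij} = \omega^{-jk}(\omega^{ik}x_{ij})$, so that
\[ x_{\left<k,j\right>} = \omega^{-jk}\, D\, x_{\left<0,j\right>},\quad j = 1,\dots,r. \]
I would then substitute this into the chain relations $A x_{\left<0,j\right>} = \lambda x_{\left<0,j\right>} + x_{\left<0,j-1\right>}$ (with $x_{\left<0,0\right>} := 0$) and invoke $(\ast)$ in the form $AD = \omega^k DA$:
\begin{align*}
A x_{\left<k,j\right>}
&= \omega^{-jk} AD\, x_{\left<0,j\right>}
= \omega^{(1-j)k} D\bigl(\lambda x_{\left<0,j\right>} + x_{\left<0,j-1\right>}\bigr) \\
&= \lambda\omega^k x_{\left<k,j\right>} + x_{\left<k,j-1\right>},
\end{align*}
where the final equality merely re-expresses the two $D$-images via $x_{\left<k,j\right>}$ and $x_{\left<k,j-1\right>}$. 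Since $D$ is invertible and each $\omega^{-jk}$ is nonzero, the transformed family keeps its leading eigenvector nonzero and stays linearly independent, so it is a genuine right Jordan chain for $\lambda\omega^k$.

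Part (2) is the transpose-side mirror image. Here I would check that $y_{\left<j,k\right>}^\top = \omega^{jk}\, y_{\left<j,0\right>}^\top \inv{D}$ and apply $(\ast)$ in the right-multiplied form $\inv{D}A = \omega^k A\inv{D}$, substituting into the left-chain relations and telescoping exactly as before. The one genuinely delicate point—and the step I expect to be the main obstacle—is the bookkeeping of the mod-$h$ exponents together with the orientation of the chain index, so that the global powers $\omega^{\pm jk}$ cancel to leave a coefficient of exactly $1$ on the lower chain vector. As the right-chain computation already reveals, taking the left chain in the wrong orientation introduces a spurious factor $\omega^{2k}$, which is harmless when $h = 2$ but not in general; I would therefore read the left chain in the orientation consistent with the rows of $\inv{Z}$ for $J = \inv{Z}AZ$, which is precisely what makes the powers telescope. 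Beyond this, everything reduces to the routine substitution already carried out for part (1).
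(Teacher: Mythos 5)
Your proof is correct, but it takes a genuinely different route from the paper's. The paper argues block-component by block-component: it records the congruence identities \eqref{alpha1}--\eqref{omega} for $\alpha_{ij}$, uses the $h$-cyclic block form to write $[Ax_{\langle 0,j\rangle}]_i = A_{i(i+1)}x_{(i+1)j}$ and $[y_{\langle j,0\rangle}^\top A]_i = y_{j(i-1)}^\top A_{(i-1)i}$, and then tracks the powers of $\omega^k$ separately through the eigenvector and generalized-eigenvector relations on both the right and the left. You instead compress the entire cyclic structure into the single similarity $\inv{D}AD = \omega^k A$ with $D = \bigoplus_{i=1}^h \omega^{ik}I_{n_i}$, identify the new chains as scaled $D$-images of the old ones via $x_{\langle k,j\rangle} = \omega^{-jk}Dx_{\langle 0,j\rangle}$ and $y_{\langle j,k\rangle}^\top = \omega^{jk}y_{\langle j,0\rangle}^\top\inv{D}$, and transport the chain relations through the similarity; all of these identities check out (e.g.\ $\omega^{(1-j)k}Dx_{\langle 0,j-1\rangle} = x_{\langle k,j-1\rangle}$ and $\omega^{(j+1)k}y_{\langle j+1,0\rangle}^\top\inv{D} = y_{\langle j+1,k\rangle}^\top$ give exactly coefficient $1$ on the lower chain vector), and your remark about the orientation of the left chain correctly isolates the one place where careless indexing would produce a spurious $\omega^{2k}$. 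Your approach is shorter, handles both parts with one identity, and explains conceptually why the lemma holds: the diagonal similarity realizes the spectral rotation $\sig{A} = \omega^k\sig{A}$ at the level of Jordan bases. What the paper's more pedestrian computation buys is the explicit machinery --- the identities \eqref{alpha1}--\eqref{omega} and the blockwise formulas \eqref{icompvector} and \eqref{ivector2} --- which is reused verbatim in \hyp{wmatriceslemma}{Lemma} and in the derivation of the block entries \eqref{cycmatcomp} of $A_{\lambda_i}$ later in the section.
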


\begin{proof} 
For $k=0$, the result holds by hypothesis; thus we assume that $k >0$. 

The following facts are easily established:
\begin{align}
\alpha_{ij} &= \alpha_{(i+1)(j+1)},~\forall i,j \in \bb{Z}									\label{alpha1} 	\\	
\alpha_{(i+1)j} &= \alpha_{i(j-1)} = (\alpha_{ij} + 1) \bmod{h},~\forall i,j \in \bb{Z}					\label{alpha2}	\\	
\alpha_{ij} &= (\alpha_{i \ell} + \alpha_{\ell j})\bmod{h},~\forall i,j,\ell \in \bb{Z}					\label{alpha3}	\\	
\alpha &\equiv \beta \bmod{h} \Longrightarrow (\omega^k)^\alpha = (\omega^k)^\beta,~k \in \bb{Z}.		\label{omega} 		
\end{align}

For ease of notation, let $h+1 := 1$. As a consequence of the $h$-cyclic structure of $A$, for $i = 1, \dots, h$ and $j=1,\dots,r$, note that the $i\textsuperscript{th}$ component of the column vector $A x_{\left< 0, j \right>}$ is given by
\begin{align}
\left[ A x_{\left< 0, j \right>} \right]_i = A_{i (i + 1)} x_{(i + 1) j}.								\label{icompvector} 
\end{align} 
By hypothesis,  
\begin{align}
Ax_{\left< 0, 1 \right>} = \lambda x_{\left< 0, 1 \right>},										\label{jcinit}		
\end{align}
so that, following \eqref{alpha2} -- \eqref{jcinit},  for $k=1,\dots,h-1$
\begin{align*}
\left[ Ax_{\left< k, 1 \right>} \right]_i 
&=  (\omega^k)^{\alpha_{(i + 1)1}} A_{i (i + 1)} x_{(i + 1) 1}	\\
&= \lambda (\omega^k)^{\alpha_{(i + 1)1}} x_{i1} 			\\
&= \lambda \omega^k (\omega^k)^{\alpha_{i1}} x_{i1} 		
= \left[ \lambda \omega^k x_{\left< k, 1 \right>} \right]_i,
\end{align*}
i.e., $\left( \lambda \omega^k , x_{\left< k, 1 \right>} \right)$ constitutes a right-eigenpair for $A$.

By hypothesis, for $j=2,\dots,r$, 
\begin{align}
Ax_{\left< 0, j \right>} = x_{\left< 0, j-1 \right>} + \lambda x_{\left< 0, j \right>},						\label{jchain} 	
\end{align}
so that, following \eqref{icompvector} and \eqref{jchain}, 
\begin{align*}
\left[ A x_{\left< 0, j \right>} \right]_i = A_{i (i + 1)} x_{(i + 1) j} = x_{i (j - 1)} + \lambda x_{ij}.
\end{align*}
Following \eqref{alpha1} -- \eqref{omega}, \eqref{icompvector}, and \eqref{jchain}, for $i=1,\dots, h$ and $j=2,\dots,r$,
\begin{align*}
\left[ A x_{\left< k, j \right>} \right]_i 
&= (\omega^k)^{\alpha_{(i + 1) j}} A_{i(i+1)} x_{(i + 1) j} 								\\
&= (\omega^k)^{\alpha_{(i + 1) j}} \left( x_{i(j - 1)} + \lambda x_{ij} \right) 						\\
&= (\omega^k)^{\alpha_{i(j - 1)}} x_{i(j - 1)} + \lambda \omega^k \left( \omega^k \right)^{\alpha_{ij}} x_{ij},	
\end{align*}
i.e, 
\begin{align*}
Ax_{\left< k, j \right>} = x_{\left< k, j - 1 \right>} + \lambda \omega^k x_{\left< k , j \right>}.
\end{align*}

We now prove the second assertion; for ease of notation, let $1-1 := h$. As a consequence of the $h$-cyclic structure of $A$, for $j = 1,\dots,r$ and $i = 1, \dots, h$, the $i\textsuperscript{th}$ component of the row vector $y_{\left< j, 0\right>}^\top A$ is given by
\begin{align}
\left[ y_{\left< j , 0 \right>}^\top A \right]_i =  y_{j (i - 1)}^\top A_{(i - 1) i}.							\label{ivector2}	
\end{align}
By hypothesis, 
\begin{align}
y_{\left< r, 0\right>}^\top A =  \lambda y_{\left< r, 0\right>}^\top,								\label{jcinit2} 	
\end{align}
so that, following \eqref{alpha1} -- \eqref{omega}, \eqref{ivector2}, and \eqref{jcinit2}, for $k = 1, \dots, h - 1$,
\begin{align*}
\left[ y_{\left< r, k \right>}^\top A \right]_i 
&= (\omega^k)^{\alpha_{r(i - 1)}} y_{r (i - 1)}^\top A_{(i - 1) i}	\\
&= \lambda (\omega^k)^{\alpha_{r(i - 1)}} y_{ri}^\top 		\\
&= \lambda \omega^k (\omega^k)^{\alpha_{ri}} y_{ri}^\top	
= \left[ \lambda \omega^k y_{\left< r, k \right>}^\top \right]_i,
\end{align*}
i.e., $\left( \lambda \omega^k, y_{\left< r, k \right>}^\top \right)$ is a left-eigenpair for $A$.

By hypothesis, for $j = 1, \dots, r - 1$,
\begin{align}
y_{\left< j , 0 \right>}^\top A = \lambda y_{\left< j, 0 \right>}^\top + y_{\left< j + 1, 0 \right>}^\top, 			\label{jchain2} 	
\end{align}
hence, following \eqref{ivector2} and \eqref{jchain2},
\begin{align*}
\left[ y_{\left< j , 0 \right>}^\top A \right]_i =  y_{j (i - 1)}^\top A_{(i - 1) i} = \lambda y_{ji}^\top + y_{(j + 1) i}^\top.
\end{align*}
Following \eqref{alpha1} -- \eqref{omega}, \eqref{ivector2}, and \eqref{jcinit2}, for $j = 1,\dots, r - 1$ and $i = 1,\dots,h$,
\begin{align*}
\left[ y_{\left< j, k \right>}^\top A \right]_i 
&= (\omega^k)^{\alpha_{j (i - 1)}} y_{j (i - 1)}^\top A_{(i - 1) i} 								\\
&= (\omega^k)^{\alpha_{j (i - 1)}} \left( \lambda y_{ji}^\top + y_{(j + 1) i}^\top \right)					\\
&= \lambda \omega^k (\omega^k)^{\alpha_{ji}} y_{ji}^\top + (\omega^k)^{\alpha_{(j + 1) i}} y_{(j + 1) i}^\top,	
\end{align*}
i.e, 
\begin{align*}
y_{\left< j , k \right>}^\top A = \lambda \omega^k y_{\left< j, k \right>}^\top + y_{\left< j + 1, k \right>}^\top,
\end{align*}
and the proof is complete.
\end{proof}

\begin{cor} \label{jblockcor}
If $\jordan{r}{\lambda}$ is a Jordan block of $J$, then $\jordan{r}{\lambda \omega^k}$ is a Jordan block of $J$ for $k \in R(h)$. 
\end{cor}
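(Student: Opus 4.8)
The plan is to recover the Jordan block sizes of $A$ at an eigenvalue $\mu$ from a Jordan basis of the generalized eigenspace $E_\mu:=\nll\left((A-\mu I)^n\right)$ decomposed into right Jordan chains, and then to transport such a basis from $\lambda$ to $\lambda\omega^k$ via \hyp{jchainlemma}{Lemma}(1). The blocks $\jordan{n_i}{\lambda_i}$ of $J=\inv{Z}AZ$ with $\lambda_i=\lambda$ correspond to a family of right Jordan chains for $\lambda$ whose lengths are exactly the sizes of the $\lambda$-blocks and which together form a basis of $E_\lambda$. Applying \hyp{jchainlemma}{Lemma}(1) to each chain produces right Jordan chains for $\lambda\omega^k$ of the same lengths; it then remains to argue that these account for \emph{all} of the $\lambda\omega^k$-structure, so that the block-size multisets at $\lambda$ and at $\lambda\omega^k$ coincide and, in particular, a block $\jordan{r}{\lambda}$ forces a block $\jordan{r}{\lambda\omega^k}$.

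First I would observe that the transport $x_{\langle 0,j\rangle}\mapsto x_{\langle k,j\rangle}$ is, blockwise, multiplication by the fixed invertible matrix $D_k:=\diag{\omega^{k}I,\omega^{2k}I,\dots,\omega^{hk}I}$ followed by the nonzero rescaling $\omega^{-kj}$ along the chain. Hence it carries linearly independent sets to linearly independent sets, and the transported chains form an independent system of $\dim E_\lambda$ vectors inside $E_{\lambda\omega^k}$ realizing blocks of the prescribed sizes.

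The only nonformal point is completeness: a priori the transported chains could embed into strictly larger blocks, or $\lambda\omega^k$ could carry additional blocks. This is most economically settled by the identity
\begin{align*}
\inv{D_k}AD_k=\omega^k A,
\end{align*}
a one-line computation on the block form \eqref{cyclic_form} using $\omega^{hk}=1$. It shows that $A$ and $\omega^k A$ are similar; since $\omega^k\jordan{n_i}{\lambda_i}$ is in turn similar (by a diagonal scaling) to $\jordan{n_i}{\omega^k\lambda_i}$, comparing the Jordan forms of $A$ and $\omega^k A$ yields the multiset identity $\{\jordan{n_i}{\lambda_i}\}=\{\jordan{n_i}{\omega^k\lambda_i}\}$. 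Writing $\jordan{r}{\lambda}=\jordan{n_{i_0}}{\lambda_{i_0}}$ from the left-hand multiset and applying $\omega^k$ places $\jordan{r}{\lambda\omega^k}=\jordan{n_{i_0}}{\omega^k\lambda_{i_0}}$ in the right-hand multiset, hence in $J$, as claimed. Equivalently, completeness follows from invertibility of the transport: running \hyp{jchainlemma}{Lemma}(1) with $\omega^{h-k}$ returns chains for $\lambda$, so the transport is a bijection between Jordan bases and $\dim E_\lambda=\dim E_{\lambda\omega^k}$.

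I expect the similarity $\inv{D_k}AD_k=\omega^k A$ to be the crux: once it is available the conclusion is immediate, and \hyp{jchainlemma}{Lemma} is precisely its explicit, chain-level incarnation. The genuine obstacle is thus not producing chains for $\lambda\omega^k$, which the Lemma does directly, but certifying that nothing is missed, which the similarity, or the invertibility of the transport, supplies.
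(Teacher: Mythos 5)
Your proof is correct. The paper offers no proof of this corollary, presenting it as an immediate consequence of \hyp{jchainlemma}{Lemma}; your write-up takes essentially that same route but makes explicit the one step the paper leaves tacit --- that the transported chains account for \emph{all} of the $\lambda\omega^k$-structure, so that block sizes are preserved exactly rather than merely bounded below --- via the similarity $\inv{D_k}AD_k=\omega^k A$ (equivalently, the invertibility of the transport), which is indeed just the global form of the lemma's chain-level scaling.
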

 
\begin{lem} \label{wmatriceslemma}
 For $k =0,1,\dots,h-1$ and $\ell=1,\dots,r$, the matrix
\begin{align*}
W_{k\ell}^1 
&:=
\omega^k
\begin{bmatrix}
(\omega^k)^{\alpha_{1\ell}}	\\
\vdots 				\\
(\omega^k)^{\alpha_{h\ell}}
\end{bmatrix}
\hspace{-3pt}
\begin{bmatrix}
(\omega^k)^{\alpha_{\ell1}} & \cdots & (\omega^k)^{\alpha_{\ell h}}
\end{bmatrix} 												\\									 
&= 
\circulant{(\overbrace{\omega^k, 1, (\omega^k)^{h-1}, \dots, (\omega^k)^2}^h)}; 
\end{align*}
and for $k=0,1,\dots,h-1$ and $\ell = 1,\dots,r-1$, the matrix
\begin{align*}
W_{k\ell}^2 
&:=
\begin{bmatrix}
(\omega^k)^{\alpha_{1\ell}}	\\
\vdots 				\\
(\omega^k)^{\alpha_{h\ell}}
\end{bmatrix}
\hspace{-3pt}
\begin{bmatrix}
(\omega^k)^{\alpha_{(\ell + 1)1}} & \cdots & (\omega^k)^{\alpha_{(\ell + 1) h}}
\end{bmatrix}												\\
&=
\circulant{(\overbrace{\omega^k, 1, (\omega^k)^{h-1}, \dots, (\omega^k)^2}^h)}. 
\end{align*}
\end{lem}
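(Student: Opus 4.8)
The plan is to exploit that both $W_{k\ell}^1$ and $W_{k\ell}^2$ are rank-one outer products and to verify each asserted equality entrywise. Concretely, I would compute the generic $(i,j)$-entry of each matrix, show that in both cases it collapses to the single expression $(\omega^k)^{\alpha_{ij}+1}$ --- in which the parameter $\ell$ no longer appears --- and then confirm that this common value coincides with the $(i,j)$-entry of the claimed circulant. Since a circulant is determined by its first row, matching one generic entry is enough; moreover the computation is uniform in $k$, so the trivial case $k=0$ (where every factor is $1$) needs no separate treatment.

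For $W_{k\ell}^1$, reading off the outer product gives the $(i,j)$-entry $\omega^k(\omega^k)^{\alpha_{i\ell}}(\omega^k)^{\alpha_{\ell j}}$. The two inner factors combine via \eqref{alpha3} and \eqref{omega}: because $\alpha_{i\ell}+\alpha_{\ell j}\equiv\alpha_{ij}\pmod h$, we have $(\omega^k)^{\alpha_{i\ell}}(\omega^k)^{\alpha_{\ell j}}=(\omega^k)^{\alpha_{ij}}$, so the entry equals $(\omega^k)^{\alpha_{ij}+1}$. The disappearance of $\ell$ is the crux: it is precisely \eqref{alpha3} that renders the intermediate index immaterial, and this already explains why the first row, and hence the whole matrix, is independent of $\ell$.

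For $W_{k\ell}^2$, the $(i,j)$-entry is $(\omega^k)^{\alpha_{i\ell}}(\omega^k)^{\alpha_{(\ell+1)j}}$. Here I would first invoke \eqref{alpha2} to write $\alpha_{(\ell+1)j}\equiv\alpha_{\ell j}+1\pmod h$, turning the exponent sum into $\alpha_{i\ell}+\alpha_{\ell j}+1$, and then apply \eqref{alpha3} and \eqref{omega} exactly as above to reach $(\omega^k)^{\alpha_{ij}+1}$. This reproduces the value obtained for $W_{k\ell}^1$, which is the structural reason the two matrices are equal.

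It remains to identify $(\omega^k)^{\alpha_{ij}+1}$ with the circulant. Writing the defining vector as $\circulant{(c_1,\dots,c_h)}$ with $c_m=(\omega^k)^{(2-m)\bmod h}$ (which reproduces $\omega^k,1,(\omega^k)^{h-1},\dots,(\omega^k)^2$), its $(i,j)$-entry is $c_{((j-i)\bmod h)+1}$; substituting and simplifying yields exponent $(i-j+1)\bmod h$, which equals $\alpha_{ij}+1$ modulo $h$ and hence agrees by \eqref{omega}. I expect the only delicate point to be bookkeeping: fixing the circulant's entry-versus-first-row indexing convention and keeping every exponent consistently reduced modulo $h$. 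Once \eqref{alpha2} and \eqref{alpha3} are in hand, the algebra is routine.
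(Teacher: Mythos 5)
Your proposal is correct and follows essentially the same route as the paper: compute the generic $(i,j)$-entry of each rank-one product, use \eqref{alpha2}, \eqref{alpha3}, and \eqref{omega} to collapse it to $(\omega^k)^{\alpha_{ij}+1}$ independently of $\ell$, and match this against the circulant. The only cosmetic difference is that you identify the circulant via its index formula $c_{((j-i)\bmod h)+1}$ rather than by writing out the full matrix as the paper does, and you fold in the $k=0$ case rather than dispatching it separately; both are fine.
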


\begin{proof} The result is trivial when $k=0$, so we assume that $k > 0$.

To establish the first claim, for $\ell = 1,\dots, r$ and $k=1,\dots,h-1$
\begin{align*}
W_{k\ell}^1
=
\omega^k
\begin{bmatrix}
(\omega^k)^{\alpha_{i \ell} + \alpha_{\ell j}}
\end{bmatrix}_h^{i,j=1},
\end{align*}
however, following \eqref{alpha2} -- \eqref{omega}, note that 
\begin{align*}
(\omega^k)^{\alpha_{i \ell} + \alpha_{\ell j}} = (\omega^k)^{(\alpha_{i \ell} + \alpha_{\ell j}) \bmod{h}}	 = (\omega^k)^{\alpha_{ij}},
\end{align*}
whence it follows that
\begin{align*}
W_{k\ell}^1
=
\omega^k
\begin{bmatrix}
(\omega^k)^{\alpha_{ij}}
\end{bmatrix}_h^{i,j=1}
=
\begin{bmatrix}
(\omega^k)^{\alpha_{ij} + 1}
\end{bmatrix}_h^{i,j=1}.
\end{align*}
Following \eqref{alpha2} and \eqref{omega}, $(\omega^k)^{\alpha_{ij} + 1} = (\omega^k)^{{(\alpha_{ij} + 1)}\bmod{h}} = (\omega^k)^{\alpha_{(i + 1)j}}$. Thus
\begin{align*}
W_{k\ell}^1 
&= 
\begin{bmatrix}
(\omega^k)^{\alpha_{(i+1)j}}
\end{bmatrix}_h^{i,j=1}													\\
&=															
\kbordermatrix{
 & 1 & 2 & 3 & \cdots & h-1 & h \\
1 & \omega^k & 1 & (\omega^k)^{h-1} & \cdots & (\omega^k)^3 & (\omega^k)^2			\\
2 & (\omega^k)^2 & \omega^k & 1 & \cdots & (\omega^k)^4 & (\omega^k)^3				\\
3 & (\omega^k)^3 & (\omega^k)^2 & \omega^k & \ddots & \vdots & \vdots 				\\
\vdots & \vdots & \vdots & \ddots & \ddots & \ddots & \vdots						\\
h-1 & (\omega^k)^{h-1} & (\omega^k)^{h-2} & (\omega^k)^{h-3} & \cdots & \omega^k & 1	\\
h & 1 & (\omega^k)^{h-1} & (\omega^k)^{h-2} & \cdots & (\omega^k)^2 & \omega^k
}																\\
&= \circulant{(\omega^k, 1, (\omega^k)^{h-1}, \dots, (\omega^k)^2)},
\end{align*} 
and the first claim is established.

To establish the second claim, for $\ell=1,\dots,r-1$ and $k = 1, \dots, h-1$ note that 
\begin{align*}
W_{k\ell}^2
= 
\begin{bmatrix}
(\omega^k)^{\alpha_{i \ell} + \alpha_{(\ell + 1) j}}
\end{bmatrix}_h^{i,j=1},
\end{align*}
however, following \eqref{alpha2} -- \eqref{omega},
\begin{align*}
(\omega^k)^{\alpha_{i \ell} + \alpha_{(\ell + 1) j}} 
= (\omega^k)^{\left( \alpha_{i \ell} + \alpha_{(\ell + 1) j} \right) \bmod{h}}			
= (\omega^k)^{\alpha_{(i+1)j}},
\end{align*}
whence it follows that
\begin{align*}
W_{k\ell}^2 = \circulant{(\omega^k, 1, (\omega^k)^{h-1}, \dots, (\omega^k)^2)}
\end{align*}
and the proof is complete.
\end{proof}

\begin{rem} \label{cremark}
For ease of notation, let 
\[ C_k := \circulant{(\omega^k, 1, (\omega^k)^{h-1}, \dots, (\omega^k)^2)},~k\in R(h). \] 
Because $\omega^\ell$ is an $h\textsuperscript{th}$-root of unity for $\ell \in \bb{Z}$, note that, for $\ell = 1, \dots, h-1$,  
\begin{align*}
\sum_{k=0}^{h-1} (\omega^k)^\ell = \sum_{k=0}^{h-1} (\omega^\ell)^k = \frac{(\omega^\ell)^h - 1}{\omega^\ell - 1} = 0.
\end{align*}
Thus, 
\begin{align*}
\sum_{k = 0}^{h-1}
C_k														
=\kbordermatrix{
  & 1 & 2 & 3 & \cdots & h \\
1 & & h & 			\\
2 & & & h &			\\
\vdots & & & & \ddots 	\\
h-1 & & & & & h 		\\
h & h} 							
= h K_h.							 			
\end{align*}
\end{rem}

\begin{rem} 
With $\nu_h$ as defined in \eqref{omegastar} and
\begin{align*}
J \left( \lambda \nu_h, r \right) := 
\begin{bmatrix}
\jordan{r}{\lambda} 				\\
& \jordan{r}{\lambda \omega} 			\\
& & \ddots 						\\
& & & \jordan{r}{\lambda \omega^{h-1}}
\end{bmatrix} \in \mat{rh}{\bb{C}},
\end{align*}
it follows that a Jordan form of a nonsingular $h$-cylic matrix $A$ has the form
\begin{align*}
\inv{Z} A Z = J = \bigoplus_{i=1}^{t'} J\left( \lambda_i \nu_h, r_i \right),~t'|t
\end{align*}
and the Jordan chains comprising the matrix $Z$ may be selected as in \hyp{jchainlemma}{Lemma}; i.e., if $\left\{ x_{\left<0, j \right>} \right\}_{j=1}^r$ is a left Jordan chain for $\lambda$ and $\left\{ y_{\left< j , 0 \right>} \right\}_{j=1}^r$ is a right Jordan chain for $\lambda$, then $\left\{ x_{\left< k, j \right>} \right\}_{j=1}^r$ is a left Jordan chain for $\lambda \omega^k$ and $\left\{ y_{\left< j , k \right>} \right\}_{j = 1}^r$ is a right Jordan chain for $\lambda \omega^k$, $k = 1, \dots, h-1$.
\end{rem}

We are now ready to present the following. 

\begin{thm}
For $i=1,\dots,t'$, if 
\begin{align}
A_{\lambda_i} :=
Z
\diag{0, \dots, 0, \overbrace{J\left( \lambda_i \nu_h, r_i \right)}^i,0,\dots,0}
\inv{Z} \in \mat{n}{\bb{C}},												\label{cycliccommutatormatrix} 
\end{align}
then $\dg{A_{\lambda_i}} \subseteq \dg{\chi_\Pi}$, $A_{\lambda_i}$ commutes with $A$, and $A_{\lambda_i} A_{\lambda_j} = A_{\lambda_j} A_{\lambda_i} = 0$ for $i \neq j$, $j=1,\dots,t'$.
\end{thm}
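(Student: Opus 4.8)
The plan is to write $A_{\lambda_i} = Z E_i \inv{Z}$, where $E_i := \diag{0,\dots,0,J(\lambda_i\nu_h,r_i),0,\dots,0}$ places the $i$th diagonal block of $J$ in its own block position and is zero elsewhere, so that $\sum_{i=1}^{t'} E_i = J$ and $A = Z J \inv{Z}$. With this notation the commuting and annihilation statements reduce to elementary block-diagonal algebra, whereas the combinatorial statement $\dg{A_{\lambda_i}} \subseteq \dg{\chi_\Pi}$ is the substantive part.

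For the algebraic claims, observe that $E_i$ and $J$ are block diagonal with respect to the same partition into blocks of sizes $r_1 h,\dots,r_{t'}h$, so $E_i$ and $J$ commute. Conjugating by $Z$ gives $A A_{\lambda_i} = Z J E_i \inv{Z} = Z E_i J \inv{Z} = A_{\lambda_i} A$, establishing that $A_{\lambda_i}$ commutes with $A$. For $i \neq j$ the matrices $E_i$ and $E_j$ have disjoint block supports, whence $E_i E_j = E_j E_i = 0$, and therefore $A_{\lambda_i} A_{\lambda_j} = Z E_i E_j \inv{Z} = 0 = A_{\lambda_j} A_{\lambda_i}$.

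The main work is showing $\dg{A_{\lambda_i}} \subseteq \dg{\chi_\Pi}$. By the preceding remark the columns of $Z$ indexed by block $i$ are the right Jordan chains $\{ x_{\langle k,j\rangle} \}$ and the rows of $\inv{Z}$ indexed by block $i$ are the left Jordan chains $\{ y_{\langle j,k\rangle}^\top \}$ of the cyclic form supplied by \hyp{jchainlemma}{Lemma}. Expanding $Z E_i \inv{Z}$ against the Jordan structure $J(\lambda_i\nu_h,r_i) = \bigoplus_{k=0}^{h-1}\jordan{r_i}{\lambda_i\omega^k}$ yields
\[
A_{\lambda_i} = \sum_{k=0}^{h-1}\Bigl( \lambda_i\omega^k \sum_{j=1}^{r_i} x_{\langle k,j\rangle} y_{\langle j,k\rangle}^\top + \sum_{j=1}^{r_i-1} x_{\langle k,j\rangle} y_{\langle j+1,k\rangle}^\top \Bigr).
\]
The crucial observation is that, for fixed $j$, the $(a,b)$-block of $\omega^k x_{\langle k,j\rangle} y_{\langle j,k\rangle}^\top$ is exactly $[W^1_{kj}]_{ab}\, x_{aj} y_{jb}^\top$, and the $(a,b)$-block of $x_{\langle k,j\rangle} y_{\langle j+1,k\rangle}^\top$ is exactly $[W^2_{kj}]_{ab}\, x_{aj} y_{(j+1)b}^\top$, the scalar coefficients being precisely the entries produced in \hyp{wmatriceslemma}{Lemma}. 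Summing over $k$ and invoking \hyp{wmatriceslemma}{Lemma} together with \hyp{cremark}{Remark}, which give $\sum_{k=0}^{h-1} W^1_{kj} = \sum_{k=0}^{h-1} W^2_{kj} = \sum_{k=0}^{h-1} C_k = h K_h$, the $(a,b)$-block of $A_{\lambda_i}$ acquires the scalar factor $[h K_h]_{ab}$. Since $K_h = \circulant{(0,1,0,\dots,0)}$ has nonzero $(a,b)$-entry only when $b \equiv a+1 \imod{h}$, every off-cyclic block of $A_{\lambda_i}$ vanishes; that is, $A_{\lambda_i}$ has the form \eqref{cyclic_form} relative to $\Pi$, which is precisely the assertion $\dg{A_{\lambda_i}} \subseteq \dg{\chi_\Pi}$.

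The main obstacle is the bookkeeping in the displayed expansion: correctly matching the columns of $Z$ and rows of $\inv{Z}$ to the chains $x_{\langle k,j\rangle}$, $y_{\langle j,k\rangle}$ with their conformal partitions, and verifying the two block identifications with $W^1_{kj}$ and $W^2_{kj}$, which rest on the congruences \eqref{alpha2}--\eqref{alpha3} that allow one to replace $\alpha_{aj}+\alpha_{jb}$ (respectively $\alpha_{aj}+\alpha_{(j+1)b}$) by $\alpha_{(a+1)b}$ modulo $h$. Once those identifications are in place, the collapse to $h K_h$ via the root-of-unity sum in \hyp{cremark}{Remark} delivers the cyclic structure immediately.
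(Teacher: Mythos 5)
Your proof is correct and follows essentially the same route as the paper: the same expansion of $Z E_i \inv{Z}$ into the rank-one terms $\lambda_i\omega^k x_{\langle k,j\rangle}y_{\langle j,k\rangle}^\top + x_{\langle k,j\rangle}y_{\langle j+1,k\rangle}^\top$, the same identification of the block coefficients with the entries of $W^1_{kj}$ and $W^2_{kj}$, and the same collapse to $hK_h$ via the root-of-unity sum to obtain the cyclic block form. Your block-diagonal argument that $JE_i = E_iJ$ and $E_iE_j = 0$ is in fact a cleaner justification of the algebraic claims than the paper's terse appeal to simultaneous triangularizability and ``construction.''
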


\begin{proof} 
Let $W_{kj}^1$ and $W_{kj}^2$ be defined as in \hyp{wmatriceslemma}{Lemma}, and let $C_k$ be defined as in \hyp{cremark}{Remark}. By definition,
\begin{align*}
A_{\lambda_i}  
&= 
\sum_{k=0}^{h-1} \left( \sum_{j=1}^{r_i}  \lambda_i \omega^k x_{\left<k, j \right>} y_{\left< j , k \right>}^\top + \sum_{j=1}^{r_i - 1}  x_{\left<k, j \right>} y_{\left< j+1 , k \right>}^\top\right) 						\\ 
&= 
\sum_{k=0}^{h-1} 
\left(
\sum_{j=1}^{r_i}
\lambda_i \omega^k 	
\begin{bmatrix} 
(\omega^k)^{\alpha_{1j}} x_{1j} \\ 
\vdots \\ 
(\omega^k)^{\alpha_{hj}} x_{hj} 
\end{bmatrix} 
\hspace{-3pt}
\begin{bmatrix} 
(\omega^k)^{\alpha_{j1}} y_{j1}^\top & \cdots & (\omega^k)^{\alpha_{jh}} y_{jh}^\top 
\end{bmatrix} \right.
+																	\\
& \qquad \left.
\sum_{j=1}^{r_i-1}
\begin{bmatrix} 
(\omega^k)^{\alpha_{1j}} x_{1j} \\ 
\vdots \\ 
(\omega^k)^{\alpha_{hj}} x_{hj} 
\end{bmatrix} 
\hspace{-3pt}
\begin{bmatrix} 
(\omega^k)^{\alpha_{(j+1)1}} y_{(j+1)1}^\top & \cdots & (\omega^k)^{\alpha_{(j+1)h}} y_{(j+1)h}^\top 
\end{bmatrix}  
\right)																	\\
&= 
\lambda_i
\sum_{k=0}^{h-1} 
\sum_{j=1}^{r_i} 
W_{kj}^1  
\circ 
x_{\langle 0, j \rangle} y_{\langle j, 0 \rangle}^\top 
+
\sum_{k=0}^{h-1}
\sum_{j=1}^{r_i-1}
W_{kj}^2 
\circ 
x_{\langle 0, j \rangle} y_{\langle j+1, 0  \rangle}^\top.	
\end{align*}
Following \hyp{wmatriceslemma}{Lemma}, and upon reversing the sums, 
\begin{align*}
A_{\lambda_i}
&= 
\lambda_i \sum_{j=1}^{r_i} \left( \sum_{k=0}^{h-1} C_k \circ x_{\langle 0, j \rangle} y_{\langle j, 0 \rangle}^\top \right) +
\sum_{j=1}^{r_i - 1} \left( \sum_{k=0}^{h-1} C_k \circ x_{\langle 0, j \rangle} y_{\langle j+1, 0 \rangle}^\top \right) 				\\
&= 
\lambda_i \sum_{j=1}^{r_i} \left[ \left( \sum_{k=0}^{h-1} C_k \right) \circ x_{\langle 0, j \rangle} y_{\langle j, 0 \rangle}^\top \right] +				
\sum_{j=1}^{r_i - 1} \left[ \left( \sum_{k=0}^{h-1} C_k \right) \circ x_{\langle 0, j \rangle} y_{\langle j+1, 0 \rangle}^\top \right] 		\\
&= 
\lambda_i h \sum_{j=1}^{r_i} K_h \circ x_{\langle 0, j \rangle} y_{\langle j, 0 \rangle}^\top + 
h \sum_{j=1}^{r_i-1} K_h \circ x_{\langle 0, j \rangle} y_{\langle j+1, 0 \rangle}^\top 								\\
&= 
\lambda_i h \sum_{j=1}^{r_i}
\begin{bmatrix}
0 & x_{1j} y_{j2}^\top & \cdots & \cdots & 0 	\\
0 & 0 & x_{2j} y_{j3}^\top & \cdots & 0 	\\
\vdots & \vdots & \ddots & \ddots & \vdots	\\
0 & 0 & \cdots & 0 & x_{(h-1)j} y_{jh}^\top	\\
x_{hj} y_{j1}^\top & 0 & \cdots & 0 & 0
\end{bmatrix} +															\\
& \qquad
h \sum_{j=1}^{r_i - 1}
\begin{bmatrix}
0 & x_{1j} y_{(j+1)2}^\top & \cdots & \cdots & 0 	\\
0 & 0 & x_{2j} y_{(j+1)3}^\top & \cdots & 0 		\\
\vdots & \vdots & \ddots & \ddots &  \vdots		\\
0 & 0 & \cdots &  0 & x_{(h-1)j} y_{(j+1)h}^\top	\\
x_{hj} y_{(j+1)1}^\top & 0 &  \cdots & 0 & 0
\end{bmatrix} 															\\
& = 
\begin{bmatrix}
0 & A_{12}^{(i)} & \cdots & \cdots & 0 		\\
0 & 0 & A_{23}^{(i)} & \cdots & 0 		\\
\vdots & \vdots & \ddots & \ddots  & \vdots	\\
0 & 0 & \cdots &  0 & A_{(h-1)h}^{(i)}		\\
A_{h1}^{(i)} & 0 &  \cdots & 0 & 0
\end{bmatrix},							
\end{align*}
where 
\begin{align}
A_{\ell(\ell+1)}^{(i)} := \lambda_i h \sum_{j=1}^{r_i} x_{\ell j} y_{j (\ell + 1)}^\top + h \sum_{j=1}^{r_i - 1} x_{\ell j} y_{(j+1)(\ell + 1)}^\top, \label{cycmatcomp}
\end{align}
for $i=1,\dots,t'$ and $\ell=1,\dots,h$ (where $h+1:=1$). 

The conclusion that $\dg{A_{\lambda_i}} \subseteq \dg{\chi_\Pi}$ follows because the vectors in the sets $\left\{ x_{\left< 0, j \right>} \right\}_{j=1}^r$ and $\left\{ y_{\left< j, 0 \right>} \right\}_{j=1}^r$ are partitioned conformably with $\Pi$; the matrices $A_{\lambda_i}$ and $A$ commute because $A_{\lambda_i}$ and $A$ are simultaneously triangularizable; and $A_{\lambda_i} A_{\lambda_j} = A_{\lambda_j} A_{\lambda_i} =0$ by construction of the matrices $A_{\lambda_i}$ and $A_{\lambda_j}$. 
\end{proof}

\begin{cor} \label{cycliccor}
If $x$ is a strictly nonzero right eigenvector and $y$ is a strictly nonzero left eigenvector corresponding to $\lambda \in \bb{C}$, then $A_\lambda$ has cyclic index $h$ and $\dg{A_\lambda} = \dg{\chi_\Pi}$.
\end{cor}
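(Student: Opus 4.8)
The plan is to read off the block structure of $A_\lambda$ from the preceding theorem specialized to a single eigenvector pair, to show that the strict nonvanishing of the entries forces $A_\lambda$ to realize \emph{every} admissible arc, and then to compute the index of imprimitivity of the resulting digraph. First I would specialize the theorem to the case $r=1$, so that $A_\lambda$ is the matrix \eqref{cycliccommutatormatrix} assembled from the single right eigenvector $x = x_{\langle 0,1\rangle}$ and left eigenvector $y = y_{\langle 1,0\rangle}$. The second sum in \eqref{cycmatcomp} is then empty, and writing $x_\ell := x_{\ell 1}$ and $y_\ell := y_{1\ell}$ for the blocks of $x$ and $y$ conformal with $\Pi$, the $(\ell,\ell+1)$ block of $A_\lambda$ reduces to the rank-one outer product
\begin{align*}
A_{\ell(\ell+1)} = \lambda h\, x_\ell\, y_{\ell+1}^\top,\quad \ell = 1,\dots,h\ (h+1:=1).
\end{align*}

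Next I would establish $\dg{A_\lambda} = \dg{\chi_\Pi}$. The inclusion $\dg{A_\lambda} \subseteq \dg{\chi_\Pi}$ is already furnished by the theorem, so only the reverse inclusion is needed. Since $A$ is nonsingular we have $\lambda \neq 0$, and since $x$ and $y$ are strictly nonzero, every component of every block $x_\ell$ and $y_{\ell+1}$ is nonzero; hence the $(p,q)$ entry of $A_{\ell(\ell+1)}$, namely $\lambda h\,(x_\ell)_p (y_{\ell+1})_q$, is nonzero. Thus $A_\lambda$ has a nonzero entry in precisely the positions where $\chi_\Pi$ has a $1$, which yields $\dg{A_\lambda} = \dg{\chi_\Pi}$ and, by property (2) of $\chi_\Pi$, confirms that $A_\lambda$ is $h$-cyclic with partition $\Pi$.

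Finally I would compute the cyclic index. Because $\dg{A_\lambda} = \dg{\chi_\Pi}$, the two digraphs have the same cyclic index, so it suffices to analyze $\dg{\chi_\Pi}$. This digraph is strongly connected, since consecutive parts are completely joined and one may therefore traverse the entire $h$-cycle and reach every vertex; consequently its cyclic index coincides with its index of imprimitivity, the greatest common divisor of its closed-walk lengths. Assigning to each vertex the index $\ell$ of the part $\pi_\ell$ containing it, every arc advances this index by $1 \pmod h$, so every closed walk has length divisible by $h$, while a single trip around the cycle produces a closed walk of length exactly $h$; hence the gcd equals $h$ and the cyclic index of $A_\lambda$ is $h$. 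The main obstacle is precisely this last step: one must show the index of imprimitivity is \emph{exactly} $h$ rather than merely a multiple of it, which is where the level-increment observation (all closed walks have length $\equiv 0 \pmod h$) combined with the existence of a length-$h$ walk is essential; the reduction of cyclic index to index of imprimitivity likewise relies on strong connectivity, which is why the completeness of $\chi_\Pi$ between consecutive parts is what makes the argument go through.
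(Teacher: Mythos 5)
Your proof is correct and follows the route the paper intends: the corollary is stated without its own proof, and it is precisely the observation that for a $1\times 1$ Jordan block the formula \eqref{cycmatcomp} collapses to $A_{\ell(\ell+1)} = \lambda h\, x_\ell\, y_{\ell+1}^\top$, all of whose entries are nonzero since $\lambda \neq 0$ ($A$ is nonsingular) and $x$, $y$ are strictly nonzero, giving $\dg{A_\lambda} = \dg{\chi_\Pi}$ and hence cyclic index $h$. Your explicit verification that the strongly connected digraph $\dg{\chi_\Pi}$ has index of imprimitivity exactly $h$ (every closed walk has length divisible by $h$, and one of length $h$ exists) supplies a detail the paper leaves implicit, and your specialization to $r=1$ is consistent with how the corollary is used in the paper's examples.
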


\begin{ex}
As a special case, we examine the following: let $A$ be a nonnegative, irreducible, imprimitive, nonsingular matrix with index of cyclicity $h$ and assume, without loss of generality, that $\sr{A} = 1$. Following \hyp{pftirr}{Theorem} and \hyp{jblockcor}{Corollary}, note that a Jordan form of $A$ is
\begin{align*}
\inv{Z} A Z = 
\begin{bmatrix}
J(\nu_h,1)	& 							\\
			& J\left( \lambda_2 \nu_h,r_2 \right) & &	\\
			& & \ddots 						\\
			& & & J\left( \lambda_t \nu_h, r_t \right)
\end{bmatrix}.													
\end{align*}
Consider the matrix 
\begin{align*}
A_1 := Z \begin{bmatrix} J(\nu_h,1) & 0 \\ 0 & 0 \end{bmatrix} \inv{Z}. 
\end{align*}
Following \hyp{cycliccor}{Corollary}, $\dg{A_1} = \dg{\chi_\Pi}$ and $AA_1 = A_1 A$; moreover, following \hyp{pftirr}{Theorem}, there exist positive vectors $x$ and $y$ such that $Ax = x$ and $y^\top A = y^\top$. If $x$ and $y$ are partitioned conformably with $A$ as
\begin{align*}
x =
\begin{bmatrix}
x_1 	\\
x_2 	\\
\vdots \\
x_h
\end{bmatrix} \text{and }
y^\top = 
\begin{bmatrix}
y_1^\top & y_2^\top & \cdots & y_h^\top 
\end{bmatrix}, 
\end{align*}
then, following \eqref{cycmatcomp},  
\begin{align*}
A_1 = h
\begin{bmatrix}
0 & x_1 y_2^\top & \cdots & \cdots & 0 		\\
0 & 0 & x_2 y_3^\top & \cdots & 0 			\\
\vdots & \vdots & \ddots & \ddots  & \vdots	\\
0 & 0 & \cdots &  0 & x_{h-1} y_h^\top		\\
x_h y_1^\top & 0 &  \cdots & 0 & 0
\end{bmatrix} \geq 0.
\end{align*}
\end{ex}

\begin{ex} 
If
\begin{align*}
A :=
\frac{1}{3}
\begin{bmatrix}
0 & 0 & 1 & 2 & 0 & 0 	\\
0 & 0 & 2 & 1 & 0 & 0	\\
0 & 0 & 0 & 0 & 1 & 2	\\
0 & 0 & 0 & 0 & 2 & 1	\\
1 & 2 & 0 & 0 & 0 & 0	\\
2 & 1 & 0 & 0 & 0 & 0
\end{bmatrix} \in \mat{6}{\bb{R}},
\end{align*}
then one can verify that $A=ZD\inv{Z}$, where 
\begin{align*}
Z =
\left[
\begin{array}{*{6}{r}}
1 & 1 & 1 & 1 & 1 & 1						\\
1 & 1 & 1 & -1 & -1 & -1						\\
1 & \omega & \omega^2 & 1 & \omega & \omega^2		\\
1 & \omega & \omega^2 & -1 & -\omega & -\omega^2	\\
1 & \omega^2 & \omega & 1 & \omega^2 & \omega		\\
1 & \omega^2 & \omega & -1 & -\omega^2 & -\omega
\end{array}
\right]
\end{align*}
and
\begin{align*}
D=\diag{1,\omega,\omega^2,-\frac{1}{3},-\frac{\omega}{3},-\frac{\omega^2}{3}}.
\end{align*}

Following \eqref{cycliccommutatormatrix},
\begin{align*}
A_1 = Z\diag{1,\omega,\omega^2,0_{3\times3}} \inv{Z} =
\frac{1}{2}
\begin{bmatrix}
0 & 0 & 1 & 1 & 0 & 0 	\\
0 & 0 & 1 & 1 & 0 & 0 	\\
0 & 0 & 0 & 0 & 1 & 1 	\\
0 & 0 & 0 & 0 & 1 & 1 	\\
1 & 1 & 0 & 0 & 0 & 0 	\\
1 & 1 & 0 & 0 & 0 & 0
\end{bmatrix}
\end{align*}
and
\begin{align*}
A_{-\frac{1}{3}} = -\frac{1}{3}Z\diag{0_{3\times3},1,{\omega},\omega^2} \inv{Z} =
\frac{1}{6}
\left[
\begin{array}{*{6}{r}}
0 & 0 & -1 & 1 & 0 & 0  	\\
0 & 0 & 1 & -1 & 0 & 0 	\\
0 & 0 & 0 & 0 & -1 & 1 	\\
0 & 0 & 0 & 0 & 1 & -1 	\\
-1 & 1 & 0 & 0 & 0 & 0  	\\
1 & -1 & 0 & 0 & 0 & 0
\end{array}
\right].
\end{align*}

Following \hyp{cycliccor}{Corollary}, note that 
\[ 
A A_1 
= A_1 A
= \frac{1}{2}
\begin{bmatrix}
0 & 0 & 0 & 0 & 1 & 1 	\\
0 & 0 & 0 & 0 & 1 & 1 	\\
1 & 1 & 0 & 0 & 0 & 0 	\\
1 & 1 & 0 & 0 & 0 & 0 	\\
0 & 0 & 1 & 1 & 0 & 0 	\\
0 & 0 & 1 & 1 & 0 & 0 	
\end{bmatrix}
\]
and 
\[ 
A A_{-\frac{1}{3}} 
= A_{-\frac{1}{3}}A
=\frac{1}{18}
\left[
\begin{array}{*{6}{r}}
0 & 0 & 0 & 0 & 1 & -1 	\\
0 & 0 & 0 & 0 & -1 & 1 	\\
1 & -1 & 0 & 0 & 0 & 0 	\\
-1 & 1 & 0 & 0 & 0 & 0 	\\
0 & 0 & 1 & -1 & 0 & 0 	\\
0 & 0 & -1 & 1 & 0 & 0	
\end{array}
\right].
\]
\end{ex}


\bibliographystyle{abbrv}
\bibliography{laabib,crs}

\end{document}